\newcommand{\N}{\mathbb{N}}
\newcommand{\Z}{\mathbb{Z}}
\newcommand{\C}{\mathbb{C}}
\newcommand{\R}{\mathbb{R}}
\renewcommand{\P}{\mathcal{P}}
\newcommand{\X}{\mathcal{X}}
\newcommand{\res}{\mathrm{res}}
\newtheorem{Theo}{Theorem}
\newtheorem{Lem}{Lemma}
\newtheorem{cor}{Corollary}
\begin{document}
\title{Natural boundaries of Dirichlet series}
\author[G. Bhowmik]{Gautami Bhowmik}
\author[J.-C. Schlage-Puchta]{Jan-Christoph Schlage-Puchta}
\begin{abstract}
We prove some conditions on the existence of natural boundaries of  Dirichlet
series.  We show that  generically the presumed boundary is the natural one.
We also give an application of natural boundaries in determining asymptotic results.
\end{abstract}
\maketitle
\section{Introduction}
 It is very difficult to say much about the meromorphic
continuation of  Euler products of Dirichlet series beyond the region of convergence.
The only general method to show the existence of a natural boundary is to
prove that every point of the presumed boundary is the limit point of either
poles of zeros of the function in question. In general, it does not suffice to
prove that each point is a limit point of poles or zeros of the single factors,
since poles and zeros might cancel. 

There are, of course, many examples of special cases where precise  information was
obtained, as was done by Estermann  \cite{Est} who proved that if 
there is an Euler product $D(s)=\prod _p h(p^{-s})$ where  $ h(Y)$ is a
ganzwertige polynomial,
then $D(s)$ is absolutely convergent for $\Re(s) >1$ and can be meromorphically continued 
to the half plane $\Re(s) >0$ . 
If $h(Y)$ is a product of cyclotomic polynomials, then and only then can $D(s)$
be continued to the whole complex plane.
Dahlquist \cite{Dahl} generalised this result to $h$ being any analytic function with isolated singularities
within the unit circle.  This line of investigation was continued to several variables. 
 Analytic continuations of  multiple zeta functions 
received a lot of attention in recent years, especially by the Japanese
school. The known examples confirm the 
 belief that if there is an obvious candidate for a boundary,
it is the natural boundary.

However, in cases like  $D(s)=\prod _p h(p,p^{-s})$, which occur in the study
of algebraic groups (see, for example, \cite{duSG}), the above belief is yet to
be confirmed. Thus 
a simple case like $D(s) = \prod_p \Big(1+p^{-s} + p^{1-2s}\Big)$ \cite{duSPre}
remains resistant to our understanding.
In this paper, we find some conditions such that too much 
cancellation among potential singularities becomes impossible and a
natural boundary is obtained (Theorem 1). This helps us give partial answers to
series like the one just cited  (Corollary 1).

Our belief in the possibility of meromorphic continuation upto an expected
domain
is strengethened by a generic consideration. Several authors (see, for example,
 \cite{Ka} or \cite{Queff}) studied  Dirichlet series with
random coefficients and showed that such series almost surely have natural
boundaries.
In this paper, the definition of a random series is adapted to serve our
interest
and we prove that almost surely the  series thus defined has meromorphic continuation
upto the presumed half-plane (Theorem 3).

Finally, we show that the existence of a natural boundary can help in
obtaining  $\Omega$-results for  Dirichlet series associated to counting functions.
We prove that if $D(s)=\Sigma a(n)n^{-s} $ has a natural boundary at $\Re s=\sigma$,  then there does
not exist an explicit formula of the form 
$A(x) := \sum_{n\leq x} a_n = \sum_{\rho} c_\rho x^\rho +
\mathcal{O}(x^\sigma)$, where $\rho$ is a zero of the Riemann zeta function
and hence it is possible to obtain a term 
 $\Omega(x^{\sigma -\epsilon})$ in the asymptotic expression for $A(x)$. We
 treat the Igusa-Weil function for algebraic groups for rank 3 (interpreted as
 a counting function) as an example of this manoeuvre.

\section{Criteria for a boundary}
Combinatorics of sets of integers and real numbers are often an
ingredient of the proofs of natural boundary; confer, for instance,
Dahlquist's concept of vertex numbers \cite{Dahl} . The following
Lemma shows that in an appropriate setting, there cannot be too much
cancellations among potential singularities. For a discrete set
$\X\subset[0, \infty)$, denote by $\X(x)$ the number of elements of
$\X\cap[0, x]$. 
\begin{Lem}
\label{Lem:NumComb}
Let $0<\epsilon<1$ be given. Let $\P$ be a set of primes with
$\P((1+\epsilon)x)-\P(x)\gg x^\theta\log^2 x$ and let $\X\subset[0, T]$ be a
discrete set of real numbers satisfying $\X(x)\ll x\log x$ and
$T\in\X$. Assume that for all $p\in\P$ and all $x\in\X$ with
$\frac{x}{p}\in[1, 1+\epsilon]$, there exists some $n\in\N\setminus\P$ and
some $y\in\X$ with $\frac{x}{p}=\frac{y}{n}$. Then we have
$\theta<\frac{\sqrt{5}-1}{2}=0.618\ldots$.
\end{Lem}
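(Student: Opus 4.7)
The plan is to apply the hypothesis twice, anchoring at $T\in\X$, and to compare the number of distinct $\X$-endpoints produced after two iterations with the growth bound $\X(T)\ll T\log T$. The golden-ratio threshold $(\sqrt{5}-1)/2$ is the positive root of $\theta^{2}+\theta=1$, so the task is to derive this quadratic inequality strictly.

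First I would establish a uniqueness principle: for any $y\in\X$ with $y\le T$, the representation $y=Ta/p$ with $p\in\P$ and $a\in\N\setminus\P$ is unique. Indeed, if $ap'=a'p$ with distinct primes $p,p'$, then $p\mid a$, so $a=p\alpha$ and $a'=p'\alpha$ with $\alpha\ge2$ (forced by $a\notin\P$), giving $y=T\alpha>T$, a contradiction. Applying the hypothesis with $x=T$ and each prime $p\in\P\cap[T/(1+\epsilon),T]$ then produces a level-one set $L_{1}\subset\X$ with $|L_{1}|\gg T^{\theta}\log^{2}T$, distinct primes producing distinct $y$. Combined with $\X(x)\ll x\log x$, the sub-collection $L_{1}^{\mathrm{big}}:=L_{1}\cap[cT^{\theta}\log T,T]$ still has cardinality $\gg T^{\theta}\log^{2}T$ once $c>0$ is chosen small enough, since the complement has size $\ll cT^{\theta}\log^{2}T$.

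Next I would iterate: for each $y\in L_{1}^{\mathrm{big}}$ the prime-density hypothesis supplies $\gg y^{\theta}\log^{2}y\gg T^{\theta^{2}}(\log T)^{\theta+2}$ primes in $[y/(1+\epsilon),y]$, and each produces a level-two element $z\in\X$ of the shape
\[
z=T\,\frac{a\,a'}{p\,p'},\qquad p,p'\in\P,\ a,a'\in\N\setminus\P.
\]
Hence the total number of 4-tuples $(p,a,p',a')$, equivalently of paths $T\to y\to z$, is $\gg T^{\theta+\theta^{2}}(\log T)^{\theta+4}$. To convert this into a count of distinct $z$ I would estimate the multiplicity: writing $z/T=N/D$ in lowest terms forces $D\mid pp'$, so in the generic case $D=pp'$ the pair $(p,p')$ is determined up to order while $(a,a')$ is a factorization of $N$, giving at most $2\,d(N)\ll T^{o(1)}$ representations. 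The number of distinct $z\in\X$ is then $\gg T^{\theta+\theta^{2}-o(1)}(\log T)^{\theta+4}$, and comparing with $\X(T)\ll T\log T$ yields
\[
T^{\theta+\theta^{2}-1-o(1)}(\log T)^{\theta+3}\ll1,
\]
which forces strictly $\theta+\theta^{2}<1$, i.e.\ $\theta<(\sqrt{5}-1)/2$.

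The main obstacle is the multiplicity estimate: controlling the degenerate paths where the denominator $D$ of $z/T$ is strictly smaller than $pp'$, so that one of the primes becomes effectively "free". The case $D=1$ pins $z=T$ and can be ruled out because the corresponding $y_{1}$ lies in $[1,1+\epsilon]$ and so is absent from $L_{1}^{\mathrm{big}}$; the remaining cases (where $D$ is a prime or a prime square) must be shown to contribute only a lower-order share of the total paths, using the divisor-function estimate together with the two surplus $\log T$ factors built into the hypothesis $\P((1+\epsilon)x)-\P(x)\gg x^{\theta}\log^{2}x$. It is precisely this surplus that makes the final inequality strict.
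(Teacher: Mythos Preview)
Your two-step iteration anchored at $T$ and the target inequality $\theta+\theta^{2}<1$ match the paper's argument exactly, and your first-level injectivity (distinct $p$ give distinct $y_{p}=Ta/p$) is correct. The genuine gap is the second-level multiplicity bound. With the size constraints you have ($p>T/(1+\epsilon)$, $a,a'\le T$, $a,a'\notin\P$) one checks $p\nmid a$, $p\nmid a'$ and $p'\nmid a$, so the \emph{only} way the reduced denominator $D$ of $z/T=aa'/(pp')$ falls below $pp'$ is $p'\mid a'$; equivalently $z=ym$ for an integer $m\ge 2$. In that case $D=p$, so $z$ determines $p$ and hence $y$, but it does \emph{not} determine $p'$: every $p'\in\P\cap[y/(1+\epsilon),y]$ whose chosen witness happens to be $(mp',ym)$ lands on this same $z$, and nothing in the hypothesis prevents this from occurring for all such $p'$ simultaneously. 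Hence the multiplicity of a single degenerate $z$ can be as large as $\gg y^{\theta}\log^{2}y\gg T^{\theta^{2}}\log^{2}T$, not $T^{o(1)}$; the divisor bound and the spare logarithms do not absorb this.

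The paper closes this gap with a maximality trick you are missing. Before the second iteration it replaces $y_{p}$ by $a_{p}y_{p}\in\X$, where $a_{p}\in\N$ is chosen so that $a_{p}y_{p}\in\X$ but $k\,a_{p}y_{p}\notin\X$ for every integer $k\ge 2$; the hypothesis is then applied at $x=a_{p}y_{p}$ rather than at $y_{p}$. Now $p'\mid n'$ would force $z_{p,p'}=(n'/p')\,a_{p}y_{p}\in\X$ with $n'/p'\ge 2$, contradicting the maximality of $a_{p}$. Thus the degenerate case is eliminated outright, and one gets an honest $2$-to-$1$ bound $z_{p,p'}=z_{\tilde p,\tilde p'}\Rightarrow\{p,p'\}=\{\tilde p,\tilde p'\}$ with no $T^{o(1)}$ loss. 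Once this device is in place, your counting and the comparison with $|\X|\ll T\log T$ go through essentially as you wrote. (Incidentally, in your non-degenerate case the bound $2d(N)$ is looser than needed: since $a$ and $a'$ are already fixed by the choices at each level once $p,p'$ are known, the multiplicity there is just $\le 2$.)
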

Of course, the condition on the growth of $\X$ is somewhat arbitrary; the
formulation chosen here being  dictated by the application, where $\X$
will be chosen as a set of abscissae of certain zeros of $\zeta$.
\begin{proof}
For each $p\in\P\cap[\frac{T}{1+\epsilon}, T]$, there exists some $y_p\in\X$
such that there is some $n\not\in\P$ with $\frac{T}{p}=\frac{y_p}{n}$. For
each such $p$, choose an integer $a_p$ with the property that $a_py_p\in\X$,
but $ka_py_p\not\in\X$ for all integers $k\geq 2$. Next, for each $p$ choose
prime numbers $p'\in\X\cap[\frac{a_py_p}{1+\epsilon}, a_py_p]$ with $p'\nmid
a_p$. For each such choice, there exists an element $z_{p, p'}\in\X$ and an
integer $n'\not\in\P$, such that $\frac{a_py_p}{p'}=\frac{z_{p, p'}}{n'}$. We
claim 
that $z_{p, p'} = z_{\tilde{p}, \tilde{p}'}$ implies that $\{p, p'\} =
\{\tilde{p}, \tilde{p}'\}$. In fact, we have
\[
z_{p, p'} = z_{\tilde{p}, \tilde{p}'}\quad\Leftrightarrow T\frac{a_pnn'}{pp'}
= T\frac{a_{\tilde{p}}\tilde{n}\tilde{n}'}{\tilde{p}\tilde{p}'}.
\]
By construction, all the integers $a_p, a_{p'}, n, \tilde{n} ,n', \tilde{n}'$
are at most $T$, while $p$ and $\tilde{p}$ are at least
$\frac{T}{1+\epsilon}$. Hence, neither $p$ nor $\tilde{p}$ cancel, and we
either obtain $p=\tilde{p}$, or $p=\tilde{p}'$ and $p'=\tilde{p}$. In the
latter case the sets $\{p, p'\}$ and $\{\tilde{p}, \tilde{p}'\}$ coincide, and
we are done. To deal with the first possibility, note that the choice of $a_p$
and $n$ depend  only on $p$, hence, $z_{p, p'} = z_{\tilde{p}, \tilde{p}'}$
implies $\frac{n'}{p'} = \frac{\tilde{n}'}{\tilde{p}'}$. If $p'$ does not
divide $n'$, this implies $p'=\tilde{p}'$, and we obtain $\{p, p'\} =
\{\tilde{p}, \tilde{p}'\}$ as in the first case. Finally, we show that by 
construction of $p'$ and $n'$, $p'$ can never divide $n'$. In fact, $p'\neq
n'$, since otherwise $n'$ would be in $\P$, contrary to our choice of $n'$. Thus,
$\frac{n'}{p'}=k$ would be an integer $\geq 2$, and we would obtain $z_{p,
  p'}=ka_py_p$, which would contradict our definition of $a_p$. Hence, we have
shown that $z_{p, p'}$ indeed determines the set $\{p, p'\}$. Next, we
estimate the number of sets $\{p, p'\}$ in the above manner. By assumption,
there are $\gg T^\theta\log^2 T$ choices for $p$. The growth condition for
$\X$ implies that there are at least $T^\theta$ prime numbers $p$, such that
$y_p>T^\theta$. For each such prime $p$, the number of choices for $p'$ is
$\gg y_p^\theta\log^2 y_p\gg T^{\theta^2}\log^2 T$, hence, the total number of
pairs $(p, p')$ is of order of magnitude $T^{\theta+\theta^2}\log^2 T$, and
the number of unordered sets differs from this quantity by a factor of 2 at
most. Hence, we obtain the estimates
\[
T^{\theta+\theta^2}\log^2 T\ll |\{z_{p, p'}\}| \leq |\X |\ll T\log T,
\]
which implies $\theta+\theta^2<1$, that is, $\theta<\frac{\sqrt{5}-1}{2}$.
\end{proof}
\begin{Theo}
\label{Thm:SpecialBoundary}
Assume the Riemann $\zeta$-function has infinitely many zeros off the line
$\frac{1}{2}+it$. Suppose that $f$ is a function of the form
$f(s)=\prod_{\nu\geq1}\zeta(\nu(s-\frac{1}{2})+\frac{1}{2})^{n_\nu}$ where the
exponents $n_\nu$ are rational integers and the series
$\sum\frac{n_\nu}{2^{\epsilon\nu}}$ converges absolutely for every
$\epsilon>0$. Then $f$ is holomorphic in the 
half plane $\Re s>1$ and has meromorphic continuation in the half plane $\Re
s>\frac{1}{2}$. Denote by $\P$ the set of prime numbers $p$, such that
$n_p>0$, and suppose that for all $\epsilon>0$ we have
$\P((1+\epsilon)x)-\P(x)\gg x^\frac{\sqrt{5}-1}{2}\log^2 x$. Then the line $\Im
s=\frac{1}{2}$ is the natural boundary of $f$; more precisely, every point of
this line is accumulation point of zeros of $f$.
\end{Theo}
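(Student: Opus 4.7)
The plan is to combine a direct convergence argument for the meromorphic continuation with a cancellation analysis that plugs into Lemma 1. For the continuation, on any compact $K \subset \{\Re s > \frac{1}{2}\}$ fix $\epsilon > 0$ with $\Re s - \frac{1}{2} > \epsilon$ throughout $K$. Each factor $\zeta(\nu(s-\frac{1}{2})+\frac{1}{2})^{n_\nu}$ is meromorphic on $K$, and since the inner argument has real part at least $\frac{1}{2} + \nu\epsilon$, this factor differs from $1$ by $O(2^{-\nu\epsilon})$. The hypothesis $\sum |n_\nu| 2^{-\nu\epsilon} < \infty$ then gives uniform convergence of $\log f$ on $K$ away from the locally finite set of zeros and poles contributed by small-$\nu$ factors, hence meromorphy of $f$ on $\Re s > \frac{1}{2}$; holomorphy on $\Re s > 1$ is identical.

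For the natural boundary I argue by contradiction. Assume $f$ extends meromorphically to a neighborhood of some $\frac{1}{2} + it_0$; after a vertical translation and rescaling I may assume the extension is valid across an arc $\{\frac{1}{2} + it : t \in [1, 1+\epsilon]\}$ for some small $\epsilon$. Let $T$ be a large parameter and let $\rho = \beta + i\gamma$ range over zeros of $\zeta$ with $\beta > \frac{1}{2}$ (infinitely many such exist by hypothesis together with the functional equation) and $\gamma/p \in [1, 1+\epsilon]$ for a prime $p \in \P$ near $T$. The point $s_{p,\rho} := \frac{1}{2} + (\rho - \frac{1}{2})/p$ is then a zero of order $n_p > 0$ of the $p$-th factor, lying within $O(1/p)$ of the arc, and so for $p$ large it lies inside the region of meromorphic extension. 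Discreteness of zeros of the extended $f$ forces each $s_{p,\rho}$ to be neutralised by a pole of matching multiplicity from another factor; the only available source is some $\nu$ with $n_\nu < 0$ such that $\zeta(\nu(s_{p,\rho}-\frac{1}{2})+\frac{1}{2}) = 0$. That is, there exist $\nu \in \N \setminus \P$ and a nontrivial zero $\rho' = \beta' + i\gamma'$ of $\zeta$ with $(\rho-\frac{1}{2})/p = (\rho'-\frac{1}{2})/\nu$, and in particular $\gamma/p = \gamma'/\nu$. (Cancellation via the pole of $\zeta$ at $1$ would force $\rho$ real and can be discarded.)

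Take $\X := \{\gamma \leq T : \zeta(\beta + i\gamma) = 0 \text{ for some } \beta\}$, adjusted so that $T \in \X$; the Riemann--von Mangoldt formula gives $\X(x) \ll x \log x$. With $\P$ as in the theorem and $\theta = \frac{\sqrt{5}-1}{2}$, the preceding paragraph supplies exactly the cancellation hypothesis of Lemma 1. The lemma then forces $\theta < \frac{\sqrt{5}-1}{2}$, a contradiction. Since $t_0$ was arbitrary and the assumption on $\P$ holds for all $\epsilon > 0$, every point of the line $\Re s = \frac{1}{2}$ is an accumulation point of zeros of $f$.

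The main obstacle is the bookkeeping in the cancellation step: one must ensure that the zero $\rho'$ produced really has $\gamma' \leq T$, which may require enlarging $T$ relative to the permissible range of $\nu$, and the convergence condition on $n_\nu$ is what prevents a pathological $\nu$ of enormous size from producing spurious cancellations outside the scale of the argument. Separating off the trivial cancellation coming from the pole of $\zeta$ at $1$ is easy once one notes it forces real $\rho$, and simple zeros of $\zeta$ may be assumed to simplify the multiplicity bookkeeping (otherwise one works with the unsigned vanishing order everywhere).
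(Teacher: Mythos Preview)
Your overall strategy matches the paper's: a convergence argument for the meromorphic continuation, then feeding the forced cancellations into Lemma~1. The genuine gap is in your choice of $\X$. You take $\X$ to be the imaginary parts of \emph{all} zeros of $\zeta$ up to $T$, but Lemma~1 requires the cancellation hypothesis for \emph{every} $x\in\X$, and your argument only supplies it for those $x$ that are imaginary parts of zeros with $\beta>\tfrac12$. For an on-line zero $\rho=\tfrac12+i\gamma$ the point $s_{p,\rho}=\tfrac12+i\gamma/p$ lies \emph{on} the line $\Re s=\tfrac12$, outside the domain where $f$ is given by the convergent product, so no cancellation is forced there and the hypothesis of Lemma~1 fails for such $x$. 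Your own ``main obstacle'' paragraph flags the companion problem: even for off-line $\rho$, you have no mechanism keeping the produced $\gamma'$ below $T$, i.e.\ inside $\X$.

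The paper resolves both issues with a single device: fix one off-line zero $\rho_0$ (with large imaginary part) and let $\X$ be the set of imaginary parts of zeros of $\zeta$ lying on the \emph{line through $\tfrac12$ and $\rho_0$}. Since nontrivial zeros have real part in $(0,1)$, this line meets the critical strip in a bounded segment, so $\X$ is automatically finite with a maximum $T$; every element of $\X$ is off the critical line; and the cancellation relation $(\rho-\tfrac12)/p=(\rho'-\tfrac12)/\nu$ keeps $\rho'$ on the same line, hence $\Im\rho'\in\X$ for free. This is precisely the bookkeeping you were worried about. One further remark: your contradiction framing ``assume $f$ extends'' is unnecessary and slightly weakens the conclusion---the points $s_{p,\rho}$ with $\beta>\tfrac12$ already lie in $\Re s>\tfrac12$, so one argues directly that not all of them can be cancelled, producing an actual zero of $f$ in every small box to the right of the line; this yields the accumulation-of-zeros statement the theorem asserts, not merely non-extendability.
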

\begin{proof}
Let $\epsilon>0$ be given. Then only finitely many factors in the infinite
product have a pole in the half-plane $\Re s>\frac{1}{2}+\epsilon$, and we
have $\zeta(\nu(s-\frac{1}{2})+\frac{1}{2})-1 \sim
2^{\nu(s-\frac{1}{2})+\frac{1}{2}}$ for $\nu\rightarrow\infty$, thus apart
from a discrete subset we have
\begin{eqnarray*}
f(s) & = & \exp\big(\sum_\nu
n_\nu\log\zeta(\nu(s-\frac{1}{2})+\frac{1}{2})\big)\\
 & = & \exp\big(\sum_\nu
\frac{n_\nu}{2^{\nu(s-\frac{1}{2})+\frac{1}{2}}}+\mathcal{O}(1)\big),
\end{eqnarray*}
and by assumption this sum converges absolutely for all $s$ with $\Re s>\frac{1}{2}$ 
this sum converges absolutely for all $s$ with $\Re s>\frac{1}{2}$, 
hence, apart from a discrete set of poles, $f$ can be
holomorphically continued to the half-plane $\Re s>\frac{1}{2}$. 
We shall now prove that every point of the line $1/2+it$ is an accumulation 
point
of zeros or poles of $f$. To do so, note first that every point on this
line is accumulation point of zeros with real part $>1/2$ of factors
in the infinite product defining $f$. In fact, by assumption there are 
infinitely many zeros of $\zeta$ to the right of the line $\Re s= 1/2$,
thus, for every $\epsilon>0$ and every $t$ there is a zero $\rho=\sigma+iT$ of
$\zeta$, such that $\mathcal{P}(T/t)-\mathcal{P}(T/((1+\epsilon)t))\gg
(T/t)^\theta\log^2 (T/t)$, where $\theta=\frac{\sqrt 5-1}2$. In
particular, there exists a prime number $p$ with 
$n_p> 0$, such that $T/p\in[t, (1+\epsilon)t]$. Hence, to prove our claim,
we have to show that this zero cannot be cancelled by 
poles stemming from other factors. We cannot do so for a single point,
however, using Lemma~\ref{Lem:NumComb},
we can show that not all such poles or zeros can be cancelled.
In fact, let $\X$ be the set imaginary parts of zeros of $\zeta$
lying on the line passing through $\frac{1}{2}$ and $\rho$ and having
positive imaginary part. Let $T$ be the maximum of $\X$, that is
$\X\subset[0, T]$. Since the number of all zeros of $\zeta$
with imaginary part $\leq x$ is of magnitude $\mathcal{O}(x\log x)$,
we have a fortiori $\X(x)\ll(x\log x)$. If
$\frac{\rho-1/2}{p}+\frac{1}{2}$ is not a zero of $f$, there has to be
some integer $\nu$ and a zero $\rho'$ of $\zeta$, such that $n_\nu<0$,
and $\frac{\rho-1/2}{p}=\frac{\rho'-1/2}{\nu}$, that is, $\rho'$ is on
the line through $\frac{1}{2}$ and $\rho$, and has positive imaginary
part, thus, $\Im\;\rho'\in\X$. Moreover, for every $p\in\mathcal{P}$ we
have $n_p>0$, whereas $n_\nu<0$, thus, $\nu\not\in\mathcal{P}$. Since
we are not restricted in our choice of $p$ and $\rho$ except for the
conditions $p\in\mathcal{P}$ and $\frac{\Im\;\rho}{p}\in[t,
(1+\epsilon)t]$, we find that we can apply Lemma~\ref{Lem:NumComb} to
deduce $\theta<\frac{\sqrt{5}-1}{2}$. However, this contradicts our
assumption on the density of $\mathcal{P}$, which show that there is
some $p\in\mathcal{P}$ and a zero $\rho$ of $\zeta$, such that
$\frac{\rho-1/2}{p}+\frac{1}{2}$ is a zero of $f$, that is, in every
square of the form $\{s:\Re\;s\in[\frac{1}{2}, \frac{1}{2}+\epsilon], 
\Im\;s\in[t, t+\epsilon]\}$, there is a zero of $f$, that is, every
point of the line $\Re\;s=\frac{1}{2}$ is accumulation point of zeros
of $f$, and since $f$ is not identically zero, this line forms a
natural boundary.
\end{proof}
We can use Theorem~\ref{Thm:SpecialBoundary} to give a partial solution to an
``embarrassingly innocuous looking'' case (see \cite[sec. 3.2.4]{duSPre}).
\begin{cor}
\label{cor:innocent1}
Suppose that there are infinitely many zeros of $\zeta$ off the line
$\frac{1}{2}+it$. Then the function
\[
f(s) = \prod_p \Big(1+p^{-s} + p^{1-2s}\Big)
\]
has meromorphic continuation to the half plane $\Re s>\frac{1}{2}$, and the
line $\Re s=\frac{1}{2}$ is the natural boundary of $f$.
\end{cor}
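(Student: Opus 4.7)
The plan is to factor $f$ as $f = R \cdot g \cdot H$, where
\[
R(s) = \frac{\zeta(2s-1)}{\zeta(4s-2)}, \qquad g(s) = \prod_{\nu\geq0}\zeta\bigl((2\nu+1)(s-\tfrac{1}{2})+\tfrac{1}{2}\bigr)^{(-1)^\nu},
\]
and $H$ is holomorphic and non-vanishing in $\Re s>\frac{1}{2}$, and then to apply Theorem~\ref{Thm:SpecialBoundary} to $g$ and transfer the conclusion to $f$.

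To establish the factorisation, first divide $f$ by $R$, using the Euler identity $\prod_p(1+p^{1-2s})=\zeta(2s-1)/\zeta(4s-2)$:
\[
\frac{f(s)}{R(s)} = \prod_p \frac{1+p^{-s}+p^{1-2s}}{1+p^{1-2s}} = \prod_p \Bigl(1 + \frac{p^{-s}}{1+p^{1-2s}}\Bigr).
\]
Taking logarithms and expanding $\frac{1}{1+p^{1-2s}}=\sum_{k\geq 0}(-1)^k p^{k(1-2s)}$, the linear piece of each local logarithm is $\sum_{k\geq 0}(-1)^k p^{k-(2k+1)s}$. Summing over primes and writing each $\sum_p p^{k-(2k+1)s}$ as $\log\zeta((2k+1)(s-\tfrac{1}{2})+\tfrac{1}{2})$ minus the higher-order ($m\geq 2$) tail of the logarithmic Euler product collects precisely $\log g(s)$ plus a function that is holomorphic in $\Re s>\frac{1}{2}$. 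All remaining contributions---the quadratic-and-higher terms of the Taylor series for $\log(1+\cdot)$ and the subleading $m\geq 2$ tails---are, after summation, dominated by $\sum_p p^{-2\Re s}$ and its analogues, hence holomorphic in $\Re s>\frac{1}{2}$; they collect into $\log H(s)$.

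Once the factorisation is in place, the hypotheses of Theorem~\ref{Thm:SpecialBoundary} for $g$ are immediate. The exponents are $n_{2\nu+1}=(-1)^\nu\in\{\pm 1\}$ and $n_{2\nu}=0$, so $\sum_\nu n_\nu/2^{\epsilon\nu}$ converges for every $\epsilon>0$. The set $\mathcal{P}=\{p\text{ prime}:n_p>0\}$ is the primes $p\equiv 1\pmod 4$, so the prime number theorem in arithmetic progressions gives
\[
\mathcal{P}((1+\epsilon)x)-\mathcal{P}(x)\sim\frac{\epsilon x}{2\log x},
\]
which comfortably dominates the required $x^{(\sqrt 5 -1)/2}\log^2 x$. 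Theorem~\ref{Thm:SpecialBoundary} therefore identifies $\Re s=\frac{1}{2}$ as the natural boundary of $g$, with every point of the line an accumulation of zeros.

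To transfer this to $f$, observe that $R$ is meromorphic on $\mathbb{C}$ with zeros and poles forming a discrete set confined to $\Re s>\frac{1}{2}$, while $H$ is holomorphic non-vanishing in $\Re s>\frac{1}{2}$. In any bounded neighbourhood of a given point on $\Re s=\frac{1}{2}$, these extra factors contribute only finitely many cancellations, whereas $g$ has infinitely many zeros accumulating there; the uncancelled ones survive as zeros of $f$, so the line $\Re s=\frac{1}{2}$ is again the natural boundary. The main technical obstacle is the bookkeeping in the factorisation step: one must verify that after subtracting $\log R$ and $\log g$ from $\log f$, the residual double sum converges absolutely in the entire half-plane $\Re s>\frac{1}{2}$, rather than in a narrower strip depending on the order of expansion.
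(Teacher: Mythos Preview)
Your approach is correct and essentially identical to the paper's: both extract from $f$ the infinite product $\prod_{\mu\text{ odd}}\zeta\bigl(\mu(s-\tfrac12)+\tfrac12\bigr)^{(-1)^{(\mu-1)/2}}$ (the paper starting this product at $\mu=5$, you at $\mu=1$), verify the hypotheses of Theorem~\ref{Thm:SpecialBoundary} with $\mathcal P=\{p\equiv 1\pmod 4\}$, and observe that the remaining factors are meromorphic with only a discrete set of zeros and poles in $\Re s>\tfrac12$ (your $R$ and $H$; the paper's finite prefactor and its $\prod_2$). One small slip: $R(s)=\zeta(2s-1)/\zeta(4s-2)$ also has trivial zeros and poles to the left of $\Re s=\tfrac12$, so its singular set is not ``confined to $\Re s>\tfrac12$''---but this is irrelevant to the argument, which only uses discreteness inside the half-plane $\Re s>\tfrac12$.
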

\begin{proof}
The function $f$ can be expanded into a product of $\zeta$-functions as
follows:
\begin{eqnarray*}
f(s) & = & \frac{\zeta(s)\zeta(2s-1)\zeta(3s-1)}{\zeta(2s)\zeta(4s-2)} R(s)\\
&&\times\;\prod_{m\geq1} \frac{\zeta((4m+1)s-2m)}
{\zeta((4m+3)s-2m-1)\zeta((8m+2)s-4m)},
\end{eqnarray*}
where $R(s)$ is a function holomorphic in some half-plane strictly larger than
the half-plane $\Re s>\frac{1}{2}$. Denote by $D$ the infinite product on the
right of the last equation. Then we have 
\[
D(s) = \prod_{m\geq1} \frac{\zeta((4m+1)s-2m)}
{\zeta((4m+3)s-2m-1)}\prod_{m\geq1} \zeta((8m+2)s-4m)^{-1} = \prod\nolimits_1
\times \prod\nolimits_2,
\]
say. $\prod_1$ is of the form considered in Theorem~\ref{Thm:SpecialBoundary}, 
whereas for $\Re s>\frac{1}{2}$, $\prod_2$ is an absolutely
convergent product of values of $\zeta$ in the half-plane $\Re s>1$, thus,
$\prod_2$ is holomorphic and non-vanishing in the half-plane $\Re
s>\frac{1}{2}$, and therefore cannot interfere with zeros of $\prod_1$. Hence,
every point of the line $\Re s=\frac{1}{2}$ is an accumulation point of zeros of
$D$, and $D$ cannot be continued meromorphically beyond this line.
\end{proof}

Another application is the following, which partially resolves a class of
polynomials considered in \cite[Theorem~3.23]{duSPre}.

\begin{Theo}
Let $D(s)=\prod W(p, p^{-s})=\prod\zeta(ms-n)^{c_{nm}}$ be a
Dirichlet-series, such that all local zeros are to the left of the line
$\Re s=\beta$, where $\beta$ is the largest limit point of the set
$\{\frac{n}{m}: c_{nm}\neq 0\}$. Suppose that the number $P(x)$ of prime
numbers $p$ such that there is some $n$ with $c_{np}\neq 0$ and $n/p+1/2p >
\beta$ satisfies $P((1+\epsilon)x)-P(x) \gg x^{\frac{\sqrt 5-1}2}\log^2 x$. Then $\beta$ is
the natural boundary for $D$. 
\end{Theo}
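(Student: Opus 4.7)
The plan is to mirror the proof of Theorem~\ref{Thm:SpecialBoundary} with $\beta$ in place of the abscissa $1/2$. First I would establish meromorphic continuation of $D$ to $\Re s > \beta$: the largest-limit-point assumption ensures that for each $\delta > 0$ only finitely many factors $\zeta(ms - n)^{c_{nm}}$ carry singularities in $\Re s > \beta + \delta$, while the remaining factors contribute terms $\log\zeta(ms-n) = O(2^{-m\delta})$ to $\log D(s)$, so the tail of the series defining $\log D$ converges absolutely on that half-plane; letting $\delta \to 0$ gives meromorphic continuation to $\Re s > \beta$.

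To show that $\Re s = \beta$ is the natural boundary I would fix a point $\beta + i t_0$, a neighbourhood $U$ around it, and argue by contradiction that $D$ has no zero or pole in $U$. The density hypothesis on $P$ supplies $\gg (T/t_0)^\theta \log^2(T/t_0)$ primes $p \in P$ with $T/p \in [t_0, (1+\epsilon) t_0]$, where $\theta = (\sqrt{5} - 1)/2$. For each such $p$ one chooses $n_p$ with $c_{n_p p} \neq 0$ and $(n_p + 1/2)/p > \beta$; by the largest-limit-point property $(n_p + 1/2)/p$ tends to $\beta$ from above as $p$ grows. For any critical-line zero $\rho = 1/2 + i T$ of $\zeta$ the point $s_* := (n_p + \rho)/p$ then lies in $U$ and is a zero or pole of the factor $\zeta(ps - n_p)^{c_{n_p p}}$.

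Under the contradiction hypothesis this contribution must be cancelled by another factor $\zeta(m' s - n')^{c_{n' m'}}$ of opposite-sign exponent, which requires $m' s_* - n' = \rho'$ to be a zero of $\zeta$ (a pole of $\zeta$ has real argument and cannot cancel $s_*$). Equating imaginary parts yields $T/p = T'/m'$ with $T' := \Im \rho'$, and, for $\rho'$ on the critical line, equating real parts yields $m'(2 n_p + 1) = p(2 n' + 1)$. For $p$ large and coprime to $2 n_p + 1$, this forces $m' = k p$ with $k$ an odd integer $\geq 3$, so $m'$ is composite and in particular $m' \notin P$. Applying Lemma~\ref{Lem:NumComb} with $\mathcal{P} := P$ and $\mathcal{X}$ the imaginary parts of non-trivial zeros of $\zeta$, rescaled by $t_0$ so that the ratio interval becomes $[1, 1+\epsilon]$, the Riemann--von Mangoldt estimate gives $\mathcal{X}(x) \ll x \log x$ and the cancellation relation supplies the required compatibility $x/p = y/n$ with $y = T' \in \mathcal{X}$ and $n = m' \notin P$. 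The lemma's conclusion $\theta < (\sqrt{5} - 1)/2$ contradicts the density hypothesis, so $D$ must have a zero or pole in $U$, making $\beta + i t_0$ an accumulation point of singularities.

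The hard part is dealing with cancellations by off-critical zeros $\rho'$, where the real-part equation pins $\Re \rho'$ to a value dictated by a Diophantine relation so that only very restricted zeros of $\zeta$ can contribute, and with the exceptional degenerate cases $p \mid 2 n_p + 1$ or $m' \in P$; I would either invoke standard zero-density estimates to bound such cancellations or enlarge $\mathcal{X}$ judiciously while preserving the bound $\mathcal{X}(x) \ll x \log x$, and discard the thin set of exceptional primes without affecting the density exponent of $P$.
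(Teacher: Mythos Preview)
Your overall strategy---reduce to a contradiction via Lemma~\ref{Lem:NumComb}---is the paper's, and your treatment of cancellations by critical-line zeros (the relation $m'(2n_p+1)=p(2n'+1)$ forcing $p\mid m'$, hence $m'$ composite and $m'\notin P$) is essentially the paper's argument in the case where the Riemann hypothesis holds up to finitely many exceptions. The paper also reduces, as you implicitly do, to pairs $(n,m)$ with $n/m$ in an arbitrarily small interval around $\beta$, and it assumes $\beta\neq\tfrac12$ without loss so that the real-part constraint is nondegenerate; you should make these reductions explicit.

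The genuine gap is precisely what you flag as hard: cancellation by an off-critical zero $\rho'$. The paper does \emph{not} handle this with zero-density estimates or by enlarging $\mathcal{X}$; it makes an explicit dichotomy. If $\zeta$ has infinitely many zeros off the critical line, one fixes such a zero $\rho_0$ with large imaginary part and reruns the argument of Theorem~\ref{Thm:SpecialBoundary}: take $\mathcal{X}$ to be the imaginary parts of zeros of $\zeta$ lying on the line through $\tfrac12$ and $\rho_0$; any cancelling $\rho'$ is then forced onto that same line automatically, so $\Im\rho'\in\mathcal{X}$ with no Diophantine analysis needed. If instead RH holds apart from finitely many exceptions, those exceptional zeros induce only a discrete set of singularities of $D$ (accumulating at most on the real axis) and may be discarded; then every cancelling $\rho'$ is on the critical line and your argument applies. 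Your proposed fixes do not obviously close the gap: enlarging $\mathcal{X}$ to include off-critical imaginary parts destroys the real-part identity that gives $m'\notin P$, and a zero-density bound controls how many off-critical $\rho'$ exist but not how many cancellations each one can effect as $(m',n')$ ranges over all pairs with $c_{n'm'}\neq 0$. The clean resolution is the RH case split.
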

\begin{proof}
 For any
$\epsilon>0$, there is some $N$, such that in the half-plane $\Re
s>\beta+\epsilon$ the product
$\prod_{n<N}\zeta(ms-n)^{c_{nm}}$ has the same zeros and poles as
$D(s)$. Hence, to prove that the line $\Re s=\beta$ is the natural boundary of
$D(s)$ it suffices to show that for every fixed $t_0\in\R$ and $\delta>0$
there is some $\epsilon>0$ such that for $N$ sufficiently large the product
$\prod_{n<N}\zeta(ms-n)^{c_{nm}}$ has a pole or a zero in the rectangle $R$
defined by the conditions 
$\beta+\epsilon < \Re s<\beta+\delta$, $t_0<\Im s <t_0+\delta$. The latter
would follow, if we could show that there exist integers $n, m$ with
$c_{nm}\neq 0$ and a zero $\rho$ of $\zeta$, such that $\frac{n+\rho}{m}\in
R$, and such that for all other values $n'$ and $m'$,
$\rho'=m'\frac{n+\rho}{m}-n'$ is not a zero of $\zeta$. Suppose first that
$\zeta$ has infinitely many zeros off the line $\Re s=\frac{1}{2}$. Then we
choose one such zero $\rho_0$ with sufficiently large imaginary part, and apply
Lemma~\ref{Lem:NumComb} with $\mathcal{P}$ being the set of primes 
$p$ such that there is some $n$ with $c_{np}\neq 0$ and $n/p+1/2p > \beta$,
and $\mathcal{X}$ being the set of all imaginary parts of roots of $\zeta$
of the form $m'\frac{n+\rho_0}{m}-n'$ to obtain a contradiction as in the
proof of Theorem~\ref{Thm:SpecialBoundary}.

Now suppose that up to a finite number of counterexamples, the Riemann
hypothesis holds true. Since these finitely many zeros off the line of $\zeta$
can only induce a discrete set of zeros of $D(s)$ apart from a possible
accumulation points on the real line, we can totally neglect these
zeros. Similarly, we forget about all pairs $n, m$ apart from those that can
induce zeros to the right from $\beta$; in particular, we may assume that
$\beta$ is the only limit point of the set of all occurring
fractions $\frac{n}{m}$. Finally, we can neglect finitely many pairs $n, m$ and
assume that all fractions $\frac{n}{m}$ are in an arbitrarily small interval
around $\beta$. The
contribution of a zero $\rho$ induced by some $c_{nm}\neq 0$ can be cancelled
by a zero $\rho'$ only if there are integers $n', m'$ with
$m(\frac{1}{2}+i\gamma)-n=m'(\frac{1}{2}+i\gamma')-n'$, that is,
$m\gamma=m'\gamma'$, and $m-2n=m'-2n'$. Without loss we may assume that
$\beta\neq\frac{1}{2}$, that is, $\frac{n}{m}-\frac{1}{2}$ is bounded away
from 0. Then the second equation implies an upper bound for $m'$, that is, for
at each cancellation among zeros there are only finitely many zeros concerned,
that is, we may assume that among these $\rho$ is the one with largest
imaginary part. But now we can apply Lemma~\ref{Lem:NumComb} again, this time
to the set of all zeros of $\zeta$, and obtain again a contradiction.
\end{proof}

\section{A random series}

Although the problem to decide whether a given Dirichlet-series can be
meromorphically extended to the whole complex plane may be very
difficult, we believe that in most cases the obvious candidate of a natural
boundary is in fact the natural boundary. This belief is strengthened
by the following theorem, which shows that this conjecture is
generically true. Note that our definition of a random series differs
from the usual one, in which random coefficients are used (for example
in Kahane \cite{Ka} or Qu\'effelec \cite {Queff}). 
 The following definition appears to be better suited.

\begin{Theo}
\label{thm:RandomCont}
Let $(a_\nu), (b_\nu), (c_\nu)$ be real sequences, such that $a_\nu,
b_\nu\to\infty$, and set $\sigma_h=\limsup\limits_{\nu\to\infty}
-\frac{b_\nu}{a_\nu}$. Let $\epsilon_\nu$ be a sequence of independent
real random 
variables, such that
\[
\liminf_{\nu\to\infty} \max_{x\in\R} P(\epsilon_\nu=x) = 0,
\]
and suppose that for $\sigma>\sigma_h$ the series
\begin{equation}
\label{eq:ConvCond}
\sum_{\nu=1}^\infty \frac{|c_\nu+\epsilon_\nu|}{2^{a_\nu\sigma+b_\nu}}
\end{equation}
converges almost surely. Then with probability 1 the function
\[
Z(s) = \prod\limits_{\nu=1}^\infty \zeta(a_\nu s+b_\nu)^{c_\nu+\epsilon_\nu}
\]
is holomorphic in the half-plane $\Re\;s>\sigma_h$ and has the line
$\Re\;s=\sigma_h$ as its natural boundary.
\end{Theo}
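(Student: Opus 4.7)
The strategy parallels Theorem~\ref{Thm:SpecialBoundary}: first show that $Z$ is holomorphic in $\Re s>\sigma_h$, then show that every point of the line $\Re s=\sigma_h$ is an accumulation point of singular points (or zeros) of $Z$ with probability one. Holomorphy is by now routine: for any $\sigma>\sigma_h$ one has $a_\nu\sigma+b_\nu\to+\infty$, so only finitely many factors $\zeta(a_\nu s+b_\nu)$ possess poles or zeros in $\Re s>\sigma$, while on the tail the expansion $\log\zeta(w)=2^{-w}+\mathcal{O}(3^{-\Re w})$ as $\Re w\to\infty$ lets us dominate the tail of $\sum(c_\nu+\epsilon_\nu)\log\zeta(a_\nu s+b_\nu)$ by a constant multiple of the series \eqref{eq:ConvCond}, which converges almost surely by hypothesis. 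Exponentiating on each compact subset of the half-plane yields $Z$ holomorphic in $\Re s>\sigma_h$ outside a discrete set coming from the finite head.

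For the natural boundary, fix $t_0\in\R$ and $\delta>0$ and aim to produce a non-removable singularity of $Z$ in the rectangle $R=(\sigma_h,\sigma_h+\delta)\times(t_0-\delta,t_0+\delta)$ almost surely; a countable choice of $(t_0,\delta)$ then covers the whole line. Using $\sigma_h=\limsup(-b_\nu/a_\nu)$ together with the assumption $\liminf_\nu\mu_\nu=0$ for $\mu_\nu:=\max_{x\in\R}P(\epsilon_\nu=x)$, extract a single subsequence $(\nu_k)$ along which $-b_{\nu_k}/a_{\nu_k}\to\sigma_h$ and $\mu_{\nu_k}\to0$. By the Riemann--von Mangoldt density estimate, for each $\nu_k$ large the factor $\zeta(a_{\nu_k}s+b_{\nu_k})$ has many zeros at points $s^*=(\rho-b_{\nu_k})/a_{\nu_k}$ landing in $R$, with $\rho$ a nontrivial zero of $\zeta$. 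The local order of $Z$ at such $s^*$ equals $\alpha(s^*):=\sum_{\nu}m_\nu(s^*)(c_\nu+\epsilon_\nu)$, where $m_\nu(s^*)$ is the order of $\zeta$ at $a_\nu s^*+b_\nu$; only finitely many $\nu$ contribute because $a_\nu\Re s^*+b_\nu\to\infty$, and generically only $\nu_k$ itself, giving $\alpha(s^*)=c_{\nu_k}+\epsilon_{\nu_k}$. This makes $s^*$ a branch point, pole, or zero of $Z$ (in any case an obstruction to analytic continuation across the line, by the identity theorem applied to any hypothetical extension) unless $c_{\nu_k}+\epsilon_{\nu_k}=0$. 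Because $P(c_{\nu_k}+\epsilon_{\nu_k}=0)\leq\mu_{\nu_k}\to0$ and the $\epsilon_\nu$ are independent, the second Borel--Cantelli lemma produces almost surely infinitely many $\nu_k$ for which $c_{\nu_k}+\epsilon_{\nu_k}\neq0$, and hence infinitely many genuine singularities of $Z$ in $R$.

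The main technical obstacle is the exceptional event $|I(s^*)|>1$, where $I(s^*):=\{\nu:m_\nu(s^*)\neq0\}$ contains a second index $\nu'\neq\nu_k$. This amounts to a nontrivial algebraic relation between imaginary parts of two zeros of $\zeta$, so the set of such $s^*$ is extremely thin; when it does arise, the cancellation condition $\alpha(s^*)\in\Z_{\geq0}$ is a linear equation in several independent $\epsilon$-variables whose probability is still bounded by $\mu_{\nu_k}$ after conditioning on the other coordinates, so the Borel--Cantelli argument closes as before. A secondary delicacy is guaranteeing that critical-line images $s^*$ lie in the open half-plane $\Re s>\sigma_h$: if $-b_{\nu_k}/a_{\nu_k}$ approaches $\sigma_h$ strictly slower than $1/a_{\nu_k}$ one should instead exploit the pole of $\zeta$ at $1$ (producing a real pole of the $\nu_k$-factor at $(1-b_{\nu_k})/a_{\nu_k}>-b_{\nu_k}/a_{\nu_k}$), or combine zero-images from several indices in the subsequence to cover any prescribed $t_0$.
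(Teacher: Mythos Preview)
Your holomorphy argument is fine and matches the paper. The boundary argument, however, has a real gap. You work entirely inside the open half-plane $\Re s>\sigma_h$ and look for points $s^*=(\rho-b_{\nu_k})/a_{\nu_k}$ there. But nothing in the hypotheses forces any such $s^*$ to lie to the right of $\sigma_h$. Take for instance $a_\nu=\nu$, $b_\nu=\nu+\sqrt{\nu}$, so that $-b_\nu/a_\nu=-1-\nu^{-1/2}$ and $\sigma_h=-1$; then every critical-line zero of $\zeta(a_\nu s+b_\nu)$ sits on $\Re s=\tfrac{1}{2\nu}-1-\nu^{-1/2}<\sigma_h$, and the pole of that factor is at $\tfrac{1}{\nu}-1-\nu^{-1/2}<\sigma_h$ as well. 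Thus for large $\nu$ your rectangle $R\subset\{\Re s>\sigma_h\}$ contains no candidate points whatsoever, and neither of your suggested fixes helps: the pole at $1$ lands at $(1-b_{\nu_k})/a_{\nu_k}$, which in this example is still $<\sigma_h$ (your inequality $(1-b_{\nu_k})/a_{\nu_k}>-b_{\nu_k}/a_{\nu_k}$ is true but irrelevant), and ``combining several indices'' is left entirely unspecified. The paper avoids the whole issue by centring its square $S$ \emph{on} the line $\Re s=\sigma_h$ and arguing by contradiction with a hypothetical meromorphic extension of $Z$ to $S$; Backlund's explicit estimate for $N(T)$ then guarantees that $\zeta(a_\mu s+b_\mu)$ has a zero somewhere in $S$ (possibly with $\Re s<\sigma_h$), which is all that is needed.

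The paper's cancellation step is also cleaner than your Borel--Cantelli scheme and dodges the independence worry you raise in the non-generic case. Having fixed $S$ and chosen one index $\mu$ with $\max_x P(\epsilon_\mu=x)<\epsilon$, write $Z=Z_\mu\cdot\zeta(a_\mu s+b_\mu)^{c_\mu+\epsilon_\mu}$ and compare \emph{divisors} on $S$: conditioning on $(\epsilon_\nu)_{\nu\neq\mu}$ fixes the divisor $D_1$ of $Z_\mu$, while the divisor $D_2$ of $\zeta(a_\mu s+b_\mu)$ is deterministic and nonzero; hence $D_1+xD_2$ is trivial for at most one real $x$, so $P(\text{divisor of }Z\text{ on }S\text{ is trivial})\le\max_x P(\epsilon_\mu=x)<\epsilon$. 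Since $\epsilon>0$ was arbitrary this probability is zero, with no appeal to Borel--Cantelli and no need to analyse whether several indices happen to share a common $s^*$.
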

\begin{proof}
If the series (\ref{eq:ConvCond}) converges, then $Z$
can be written as a finite product of $\zeta$-functions multiplied by some
function which converges uniformly in the half-plane $\Re s>\sigma_h+\epsilon$
for each $\epsilon>0$.  Let $s_0=\sigma_h+it$ be a point on the supposed
boundary with $t\neq 0$ rational, and consider for a natural number $n$ the
square $S$
with side length $\frac{2}{n}$ centred in $s_0$, that is, the set
$[\sigma_h-\frac{1}{n}, \sigma_h+\frac{1}{n}]\times [t-\frac{1}{n},
t+\frac{1}{n}]$. Let $\epsilon>0$ be given. We show that with probability
$>1-\epsilon$ the function $Z$ is not meromorphic on $S$, or has a zero or a
pole in $S$. Once we have shown
this, we are done, for if $s_0$ were an interior point of the domain of
holomorphy of $D$, there would be some $n$ such that $Z$ would be holomorphic on $S$, and
have a zero or a pole in $S$ almost surely. Letting $n$ tend to $\infty$, we
see that $s_0$ is either a pole or a zero, or a cluster point of poles or
zeros. Hence, with probability 1, every point with rational imaginary part on
the line $\Re s=\sigma_h$ is a pole, a zero, or a cluster point of poles or
zeros. Hence, $\sigma_h$ is a natural boundary of $Z$ almost surely.

To prove the existence of a pole or zero in $S$, note first that by
the same argument used to prove alsmost sure convergence to the right
of $\sigma_h$, we see that if for some
$\epsilon>0$ there are infinitely many indices $\nu$ with
$-\frac{b_\nu}{a_\nu}<\sigma_h-\frac{1}{n}$, the product defining $Z$
extended over all such indices converges uniformly in
$\Re\;s>\sigma_h-\frac{1}{n}$, hence, deleting these indices does not
alter our claim. In particular, we may assume that for all $\mu$
sufficiently large we have
$|\sigma_h-\frac{a_\mu}{b_\mu}|<\frac{1}{n}$, $a_\mu>3n$, $|a_\mu
t|>1000$, as well as $\max_{x\in\R} P(\epsilon_\mu=x)<\epsilon$. For
such an index $\mu$ set
\[
Z_\mu(s) = \prod\limits_{\nu\neq\mu}^\infty \zeta(a_\nu
s+b_\nu)^{c_\nu+\epsilon_\nu}.
\]
If $Z$ is meromorphic on $S$, so is $Z_\mu$. Let $D_1$ be the divisor of the
restriction of $Z_\mu$ to $S$, and let $D_2$ be the divisor of $\zeta(a_\mu
s+b_\mu)$ restricted to $S$. We have to show that
$D_1+(c_\mu+\epsilon_\mu)D_2$ is non-trivial with probability
$>1-\epsilon$. To do so, it suffices to show that $D_2$ is non-trivial, since
then $D_1+ x D_2$ is trivial for at most one value of $x$, and we assumed that
$\epsilon_\mu$ is not concentrated on a single value.
The preimage of $S$ under the linear map $s\mapsto a_\mu s+b_\mu$ is a square
of side $\ell>6$ and centre with real part of absolute value
$\leq\ell$ and imaginary part of absolute value $>1000$. Hence, the number
of zeros of $\zeta(a_\mu s+b_\mu)$ in $S$ equals $N(T+h)-N(T)$, where $N$
denotes the number of zeros of $\zeta$ with imaginary part $\leq T$, and $T$
and $h$ are certain real numbers satisfying $T\geq 1000$ and $h\geq
6$. Now Backlund \cite{Ba} showed that for $T>1000$ we have
\[
\left|N(T)-\frac{T}{2\pi}\log\frac{T}{2\pi}\right| \leq 0.7\log T,
\]
that is, $N(T+6)>N(T)$ for $T>1000$, which shows that $D_2$ is non-trivial, and
proves our theorem.
\end{proof}

\section{Natural boundaries and asymptotic formulae}

The hunt for natural boundaries has certainly some intrinsic interest,
however, in this section we show that the existence of a natural
boundary implies the non-existence of an asymptotic formula of a
certain kind. This leads to a lesser known kind of $\Omega$-result :
usually when proving an $\Omega$-result, one first derives an explicit
formula with oscillating terms and then shows that these terms cannot
cancel each other out for all choices of the parameters. Here we show
that even if we allow for infinite oscillatory sums to be part of the
main terms, we still get lower bounds for the error terms.

\begin{Theo}
\label{thm:ExpCont}
Let $a_n$ be a sequence of complex numbers, and suppose that there exist an
explicit formula of the form
\begin{equation}
\label{eq:explicite}
A(x) := \sum_{n\leq x} a_n = \sum_{\rho\in\mathcal{R}} c_\rho x^\rho +
\mathcal{O}(x^\theta), 
\end{equation}
where for some constant $c$ we have $|c_\rho|\ll(1+|\rho|)^c$ and
$|\mathcal{R}\cap\{s:\Re s>\theta,|\Im s|<T\}|\ll T^c$. Then the
Dirichlet-series $D(s)=\sum a_n n^{-s}$ can be meromorphically
continued to the half-plane $\Re s>\theta$. 
\end{Theo}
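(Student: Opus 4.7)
The plan is to represent $D(s)$ via a Mellin--Perron formula and then give rigorous meaning to the formal sum of poles implicit in the explicit formula by a Mittag--Leffler regularization. Without loss of generality I may assume $\Re\rho>\theta$ for every $\rho\in\mathcal{R}$, absorbing any $\rho$ with $\Re\rho\le\theta$ into the error. For $\Re s$ above the abscissa of absolute convergence of $D$, partial summation gives $D(s) = s\int_1^\infty A(x)\,x^{-s-1}\,dx$. Substituting the explicit formula, the contribution of $E(x) = A(x)-\sum_\rho c_\rho x^\rho = \mathcal{O}(x^\theta)$ is $H(s) := s\int_1^\infty E(x)\,x^{-s-1}\,dx$, manifestly holomorphic on $\Re s>\theta$, while the remaining piece is, at least formally, $\sum_{\rho} \frac{c_\rho\rho}{s-\rho}$, carrying the expected poles.

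To make this formal series meromorphic on $\C$, I would Mittag--Leffler-regularize it. For $|s|<|\rho|$, the identity
\[
\frac{\rho}{s-\rho} + \sum_{k=0}^{K-1}\frac{s^k}{\rho^k} = \frac{s^K}{\rho^{K-1}(s-\rho)} = \mathcal{O}\!\left(\frac{|s|^K}{|\rho|^K}\right),
\]
together with $|c_\rho|\ll(1+|\rho|)^c$ and $|\mathcal{R}\cap\{|\Im s|<T\}|\ll T^c$, bounds the sum of absolute values on $|s|\le R$ dyadically by $R^K\sum_j 2^{j(2c-K)}$, which converges as soon as $K>2c$. For such a $K$,
\[
G(s) := \sum_{\rho\in\mathcal{R}} \frac{c_\rho s^K}{\rho^{K-1}(s-\rho)}
\]
converges absolutely and uniformly on compacta in $\C\setminus\mathcal{R}$, defining a meromorphic function on $\C$ with simple poles and the desired residues $c_\rho\rho$.

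The main difficulty will be to identify $G(s)$ with $D(s)-H(s)$ up to a function holomorphic on $\Re s>\theta$, since the unregularized sum $\sum_\rho \frac{c_\rho\rho}{s-\rho}$ and the polynomial correction $\sum_\rho c_\rho(s/\rho)^k$ generally diverge separately. I would handle this by truncating to $\mathcal{R}_T=\mathcal{R}\cap\{|\Im\rho|\le T\}$: for each $T$ all the sums are finite, Fubini applies, and the Mellin calculation produces an identity $D(s)=G_T(s)+K_T(s)$ on the common half-plane of definition, $K_T$ being an explicit remainder comprising the truncated error integral and the polynomial compensation. The absolute convergence $G_T\to G$ on compacta in $\C\setminus\mathcal{R}$ forces $K_T=D-G_T$ to converge as well; tracking the cancellation between the truncated polynomial corrections and the tail of the explicit formula shows that the limit $K:=D-G$ extends holomorphically to $\Re s>\theta$, yielding the required meromorphic continuation $D=G+K$.
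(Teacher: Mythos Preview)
Your route differs from the paper's: you apply a single partial summation and then Mittag--Leffler-regularize the formal pole series, whereas the paper iterates the summatory operation $k$ times (forming $A_0=A$, $A_{k+1}(x)=\sum_{\nu\le x}A_k(\nu)$) until the coefficients satisfy $c_\rho^{(k)}\ll|\rho|^{-k+M}$, so that the explicit formula for $A_k$ becomes \emph{absolutely} convergent; only then does it interchange sum and integral and arrive at a representation $D(s)=\sum_\rho P_\rho(s)\,\zeta(s-\rho-1)+R^*(s)$ whose convergence on compacta of $\Re s>\theta$ is checked directly.

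The difference is not cosmetic; your final step has a gap exactly where you flag ``the main difficulty''. The truncation gives $K_T=D-G_T$ only on the half-plane $\Re s>\sigma_0$ where $D$ is already defined, and $G_T\to G$ there forces $K_T\to K=D-G$ \emph{on that same half-plane}. Nothing in this produces a formula for $K_T$ or $K$ valid on $\theta<\Re s\le\sigma_0$: since $G$ is globally meromorphic, extending $K=D-G$ across that strip is equivalent to extending $D$, so the argument is circular as it stands. Concretely, with one summation you get $K_T-H=P_T+J_T$, where $P_T(s)=-\sum_{k=1}^{K-1}s^k\sum_{|\Im\rho|\le T}c_\rho\rho^{-k}$ and $J_T$ is the tail integral $s\int_1^\infty(\sum_{|\Im\rho|>T}c_\rho x^\rho)\,x^{-s-1}\,dx$. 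Under the stated hypotheses the inner sums $\sum_\rho c_\rho\rho^{-k}$ for $1\le k<K$ are not absolutely convergent, and there is no uniform-in-$T$ bound on the tail of the explicit formula that would give $J_T\to 0$ by dominated convergence; neither piece is controlled separately, and the asserted ``cancellation'' is precisely the content of the theorem, left unproved. The paper's iterated summation is the missing idea: it manufactures the decay that makes every interchange legitimate and yields a series visibly convergent on $\Re s>\theta$. (A minor slip: the formal Mellin contribution of $x^\rho$ is $c_\rho s/(s-\rho)$, not $c_\rho\rho/(s-\rho)$; the discrepancy $\sum_\rho c_\rho$ is itself divergent.)
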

Since the condition $\Re s>\theta$ describes an open set, we could
have formulated this theorem with an error term 
$\mathcal{O}(x^{\theta+\epsilon})$ for every $\epsilon>0$, or with
$\mathcal{O}(x^{\theta-\epsilon})$ for some $\epsilon>0$ without
affecting the conclusion. We shall move freely between these different
formulations without further mention. 
\begin{proof}
Our claim does not change if we absorb finitely many of the summands
$c_\rho x^\rho$ into the sequence $a_n$. Thus we can assume that all $\rho$
satisfy $|\Im\;\rho|\geq 1$.

Set $A_0(x)=A(x)$, $A_{k+1}(x)=\sum_{\nu\leq x} A_k(\nu)$. Then there exists an
explicit formula
\[
A_k(x) = x^k \sum_{\rho\in\mathcal{R}_k} c_\rho^{(k)} x^\rho +
\mathcal{O}(x^\theta),
\]
where $\mathcal{R}_k$ is contained in the set of all numbers of the form
$\{\rho-j:\rho\in\mathcal{R}, j\in\N\}$, and $c_\rho^{(k+1)} =
\frac{c_\rho^{(k)}}{\rho} +
\mathcal{O}\big(\max\limits_{j\in\N}c_{\rho+j}^{(k)}\big)$. By induction on $k$
we obtain
\[
c_\rho^{(k)}\ll\max\{|c_{\rho+j}|: j\in\N\} \rho^{-k+\max\{j:\rho+j\in\mathcal{R}\}},
\]
where $c_{\rho+j}$ is understood to be 0, if $\rho+j\not\in\mathcal{R}$.
Combining this estimate with the assumption on the number of elements
in $\mathcal{R}$, we see that there exists some $k$ such that the
explicit formula for $A_k$ 
converges absolutely. Note that we can immediately delete all terms
with $\Re\rho<\theta$, and $\Re\rho$ is bounded, since otherwise the explicit
formula for $A(x)$ would not converge in any sense. Thus, putting
$M=\lceil\sup\{\Re\;\rho:\rho\in\mathcal{R}\}-\theta\rceil$, we obtain
\[
c_\rho^{(k)}\ll\max\{|c_{\rho+j}|: j\in\N\} \rho^{-k+M}.
\]
Applying partial summation and interchanging the order
of summations, which is now allowed since the explicit formula is absolutely
converging, we find
\[
D(s) = \sum_{n\geq 1}\sum_{\rho\in\mathcal{R}_k}c_\rho^{(k)}
n^{k+\rho}\Delta^{k+1} n^{-s} + R(s),
 \]
where $R(s)$ is holomorphic in $\Re s>\theta$, and $\Delta$ denotes the
difference operator. Using Laurent expansion, we have for every $N$
the asymptotic formula 
\[
\Delta^{k+1} n^{-s} = \sum_{i=0}^N a_i(s) n^{-s-k-i-1} + \mathcal{O}(n^{-s-k-N-2})
\]
where the coefficients $a_i$ are polynomials of degree $i+k+1$. 
Inserting this expression in the previous formula, we obtain 
\[
D(s) = \sum_{n\geq 1}\sum_{\rho\in\mathcal{R}_k^*} c_\rho^{(k)}
n^{\rho-s}\Big( \sum_{i=0}^N 
a_i(s) n^{-i-1} + \mathcal{O}(n^{-N-1})\Big).
\]
Choosing $N$ sufficiently large, the error term yields a function
holomorphic in $\Re\;s>\theta$, and collecting all terms coming from one zero
$\rho$ which are independent of $n$ into one polynomial, we obtain
\begin{equation}
\label{eq:DRepresent}
D(s) = \sum_{\rho\in\mathcal{R}_k^*}P_\rho(s)  \zeta(s-\rho-1) + R^*(s),
\end{equation}
where $R^*(s)$ is holomorphic in $\Re\;s>\theta-1$, and $P_\rho$ is a polynomial
of degree $\leq N+k$ with coefficients $\ll c_\rho^{(k)}\ll|\rho|^{-k+M}$. 
We claim that this series is absolutely and uniformly converging in each
domain of the form $D=\{s:\Re s>\theta+\epsilon, |\Im s|<T\}$, apart from the
poles of 
$\zeta$ occurring explicitly. To prove this, we first have to estimate
$|P_\rho(s)|$. The bounds for the degree and the coefficients imply
\[
|P_\rho(s)|\ll C_{M, N} (1+|s|)^{N+k} |\rho|^{-k+M}.
\]
Since we only care about convergence, we may neglect finitely many
terms. Thus we restrict our considerations to zeros $\rho$ with
$|\Im\;\rho|>T^2$, that is, $|\rho|>|s|^2$. Finally, the functional equation for
$\zeta$ implies $\zeta(s)\ll(1+|\Im\;s|^{\max(\frac{1-\Re\;s}{2}, 0)+\epsilon})$, and we obtain
\begin{eqnarray*}
P_\rho(s)  \zeta(\rho-s) & \ll & (1+|\rho|)^{-k+M}(1+|\Im\;s|^{\max(\frac{\Re\;s-\rho+1}{2},
  0)+\epsilon}(1+|s|))^{N+k}\\
 & \ll & (1+|\rho|)^{-k+M+\frac{N+k}{2} + \max(\frac{\Re\;s-\rho+1}{4},  0)+\epsilon}\\
 & \ll & (1+|\rho|)^{-c-2},
\end{eqnarray*}
provided that
\[
k>4+2c+2M+2N+\max(\frac{\Re\;s-\rho+1}{2},0).
\]
Hence, the terms
belonging to $\rho$ are of order $\mathcal{O}\big((1+|\rho|)^{-c-2}\big)$,
whereas their number up to some constant $T$ is of order $\mathcal{O}(T^c)$,
hence, the series (\ref{eq:DRepresent}) converges absolutely and uniformly in
$D$. Hence, it represents a function holomorphic in $\Re s>\theta$, with the
exception of the discrete set of poles contained in $\mathcal{R}_k^*$. Since
for sufficiently large real part the right hand side of (\ref{eq:DRepresent})
represents $D(s)$, we deduce that this representation yields a
meromorphic continuation of $D$ to the half-plane $\Re s>\theta$.
\end{proof}
\begin{cor}
\label{Cor:NoExplicite}
Let $a_n$ be a sequence of complex numbers such that the generating
Dirichlet-series has a natural boundary at $\Re s=\sigma_h$. Then there does
not exist an explicit formula of the form (\ref{eq:explicite}). In
particular, for any sequence $\alpha_i, \beta_i$, $1\leq i\leq k$  and any
$\epsilon >0$ we have
\[
A(x) = \sum \alpha_i x^{\beta_i} + \Omega(x^{\sigma_h -\epsilon}).
\]
\end{cor}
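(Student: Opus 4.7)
The plan is to derive Corollary \ref{Cor:NoExplicite} as the direct contrapositive of Theorem \ref{thm:ExpCont}. Suppose, for contradiction, that $D(s)=\sum a_n n^{-s}$ admitted an explicit formula of the form (\ref{eq:explicite}) with error exponent $\theta<\sigma_h$, together with a set $\mathcal{R}$ and coefficients $c_\rho$ obeying the polynomial bounds $|c_\rho|\ll(1+|\rho|)^c$ and $|\mathcal{R}\cap\{s:\Re s>\theta,|\Im s|<T\}|\ll T^c$. Theorem \ref{thm:ExpCont} would then provide a meromorphic continuation of $D$ to the open half plane $\Re s>\theta$, which strictly contains the line $\Re s=\sigma_h$. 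This contradicts the hypothesis that $\Re s=\sigma_h$ is a natural boundary of $D$, so no such explicit formula can exist.

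For the ``in particular'' clause, I would observe that any finite linear combination $\sum_{i=1}^k \alpha_i x^{\beta_i}$ is a degenerate special case of (\ref{eq:explicite}): taking $\mathcal{R}=\{\beta_1,\dots,\beta_k\}$ and $c_{\beta_i}=\alpha_i$, the growth hypotheses on $\mathcal{R}$ and the $c_\rho$ hold vacuously since $\mathcal{R}$ is finite. If for some $\epsilon>0$ the error $E(x):=A(x)-\sum_i \alpha_i x^{\beta_i}$ were $o(x^{\sigma_h-\epsilon})$, then in particular $E(x)=\mathcal{O}(x^{\sigma_h-\epsilon})$, giving an explicit formula of type (\ref{eq:explicite}) with $\theta=\sigma_h-\epsilon<\sigma_h$, which the first paragraph has just ruled out. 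Hence $E(x)=\Omega(x^{\sigma_h-\epsilon})$ for every $\epsilon>0$, as claimed.

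There is no genuine obstacle here: the corollary is essentially one line of contraposition, and the only thing worth checking is that a finite exponential sum trivially meets the polynomial growth conditions imposed on $\mathcal{R}$ and on the coefficients $c_\rho$ in (\ref{eq:explicite}). All of the substantive work, namely the passage from the explicit formula to a meromorphic continuation via iterated summation and termwise comparison with shifted $\zeta$-factors, has already been carried out in the proof of Theorem \ref{thm:ExpCont}.
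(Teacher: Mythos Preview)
Your proposal is correct and matches the paper's approach: the corollary is stated in the paper without proof precisely because it is the immediate contrapositive of Theorem~\ref{thm:ExpCont}, and you have supplied exactly that argument. Your observation that a finite set $\mathcal{R}=\{\beta_1,\dots,\beta_k\}$ trivially satisfies the polynomial growth hypotheses is the only point worth noting, and you handled it cleanly.
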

In general, even if $D(s)$ is meromorphic in the entire plane we cannot expect
to obtain an explicit formula, since the integral taken over the shifted path
of integration need not converge. For example, for the Dirichlet-divisor
problem we have an $\Omega$-estimate of size $x^{1/4}$, whereas the
corresponding Dirichlet-series $\zeta^2(s)$ is meromorphic on $\C$. However,
we can obtain explicit formulae after attaching a sufficiently smooth weight
function. To do so, we need some bounds on the growth of the Dirichlet-series
in question.
\begin{Lem}
\label{Lem:Path}
Let $W\in\Z[X, Y]$ be a polynomial with $W(0, 0)=1$ and not containing the
monomial $X$. Let $D(s)=\prod_p W(p^{-1}, p^{-s})$ be the associated
Dirichlet-series, and let $\sigma_o$ be the abscissa of obvious meromorphic
continuation, and let $\sigma>\sigma_o$ be a real number.
\begin{enumerate}
\item There exists a $\mu(\sigma)$ such that $D(s)$ is the quotient of two
  functions $f_1, f_2$, both of which are holomorphic in the half-plane
  $\Re s\geq\sigma_o$ up to a bounded number of poles on the real axis, and
  satisfy $|f_i(\sigma+it)|\ll |t|^{\mu(\sigma)}$ for $|t|>1$.
\item The number of poles of $D$ in the domain $\Re s\geq\sigma$,
  $|\Im s|\leq T$ is bounded above by $c_\sigma T\log T$.
\item There is some $\mu^*(\sigma)$, such that for every $\epsilon>0$ and $T$
  sufficiently large there exists a path 
  $\gamma:[0, 1]\to\C$ 
  consisting of horizontal and vertical lines only, which is contained in the
  strip $\sigma\leq\Re s\leq\sigma+\epsilon$, has length $\leq (2+\epsilon
  T)$, such that $\Im\gamma(0)=-T$, $\Im\gamma(1)=T$, and
  $|D(s)|<e^{\mu^*(\sigma)\log^2 T}$ on all of $\gamma$.
\end{enumerate}
\end{Lem}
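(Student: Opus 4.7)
The starting point is the Estermann--Dahlquist-type factorization. Because $W(0, 0) = 1$ and $W \in \Z[X, Y]$, there are integer exponents $c_{a, b}$ such that $W(X, Y) = \prod_{(a, b)} (1 - X^a Y^b)^{-c_{a, b}}$ as a formal identity in $\Z[[X, Y]]$. Substituting $X = p^{-1}$, $Y = p^{-s}$ and taking the product over primes gives
\[
D(s) = \prod_{(a, b),\, b \geq 1} \zeta(bs + a)^{c_{a, b}} \cdot H_0(s),
\]
where $H_0$ absorbs the contributions of the $b = 0$ terms. The hypothesis that $W$ contains no monomial $X$ means exactly $c_{1, 0} = 0$, so the otherwise divergent $\sum_p p^{-1}$ does not appear; the remaining contributions to $H_0$ come from $\sum_p p^{-a}$ with $a \geq 2$, and so $H_0$ is holomorphic and bounded in a half-plane slightly larger than $\Re s \geq \sigma_o$. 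For any $\sigma > \sigma_o$, only finitely many of the $\zeta$-factors have poles or zeros in $\Re s \geq \sigma$; the infinite tail of the product is a bounded non-vanishing holomorphic function there, so everything reduces to a finite product $D_{\mathrm{fin}}(s) = \prod_{\nu = 1}^L \zeta(b_\nu s + a_\nu)^{c_\nu}$.

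Part (1) is then immediate: split $D_{\mathrm{fin}}$ by the sign of $c_\nu$ and write $D = f_1/f_2$ with both $f_i$ a finite product of shifted $\zeta$-functions to positive integer powers times a bounded factor. Their poles come from those of $\zeta$ and so lie at the finitely many real points with $b_\nu s + a_\nu = 1$, and the convexity bound $|\zeta(\sigma + it)| \ll (1 + |t|)^{\mu_\zeta(\sigma)}$ yields the claimed polynomial growth. Part (2) follows from the same decomposition: poles of $D$ in the region arise either from real poles of $f_1$ (bounded in number) or from zeros of $f_2$, and a zero of $\zeta(b_\nu s + a_\nu)$ in $\Re s \geq \sigma$, $|\Im s| \leq T$ corresponds to a zero of $\zeta$ with $|\Im\,\rho| \leq b_\nu T$, of which there are $O(b_\nu T \log T)$ by the Riemann--von Mangoldt formula. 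Summing over the $L$ relevant indices gives $c_\sigma T \log T$.

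For part (3) the plan is a staircase path. By part (2), each unit subinterval $[k, k+1] \subset [-T, T]$ contains $\ll \log T$ imaginary parts of poles of $D$ in the strip $[\sigma, \sigma + \epsilon]$, so a pigeonhole argument (applied simultaneously to all $L$ factors of $f_2$) produces a height $t_k \in [k, k+1]$ at distance $\geq 1/(C \log T)$ from all these imaginary parts. On a horizontal crossing at height $t_k$, the Borel--Carath\'eodory theorem applied to $\log \zeta(b_\nu s + a_\nu)$ on a disk of bounded radius, using the polynomial upper bound on $|\zeta|$ together with the lower bound on the distance to the nearest zero, gives $|1/\zeta(b_\nu s + a_\nu)| \ll \exp(C' \log^2 T)$ uniformly on the segment; combined with the polynomial bound on $f_1$ this yields $|D(s)| \leq \exp(\mu^*(\sigma) \log^2 T)$. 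Connect consecutive good heights by a vertical segment at a real part $\in [\sigma, \sigma + \epsilon]$ chosen to avoid the finitely many real parts of poles in the intervening strip; the same bound on $|D|$ persists there. The total vertical length is at most $2T$, while the $\leq 2T$ horizontal crossings each have length $\leq \epsilon$, giving total length $\leq (2 + \epsilon)T$.

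I expect the main obstacle to be in part (3): the Borel--Carath\'eodory step must produce an implied constant $\mu^*(\sigma)$ depending only on $\sigma$ and not on $T$, and the pigeonhole must simultaneously dodge zeros of all $L$ factors of $f_2$, requiring careful tracking of constants when the total pole count grows like $T \log T$. Parts (1) and (2) reduce straightforwardly to the factorization together with the classical convexity bound and Riemann--von Mangoldt formula for $\zeta$.
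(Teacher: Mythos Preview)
Your proposal is correct and for parts (1) and (2) is essentially identical to the paper's argument: finite Estermann--Dahlquist factorization into shifted $\zeta$-factors, separation by the sign of the exponents, convexity for the polynomial growth, and Riemann--von Mangoldt for the pole count.

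For part (3) there is a genuine but minor methodological difference. The paper does not invoke Borel--Carath\'eodory; instead it uses the partial-fraction expansion
\[
\frac{{D^*}'}{D^*}(s)=\sum_{\rho}\frac{m_\rho}{s-\rho}+O(\log T),
\]
the sum running over zeros and poles with $|\Im(\rho-s)|<1$. A pigeonhole on the \emph{real} part (not on heights, as you do) produces a $\sigma'\in[\sigma,\sigma+\epsilon]$ with no zero or pole within $\epsilon/(c_\sigma\log T)$ of the vertical line $\Re s=\sigma'$ in each unit box, giving $|{D^*}'/D^*|\ll\log^2 T$ there; the bound on $|D|$ is then obtained by integrating $\log D^*$ along a path of bounded length back to the half-plane of absolute convergence. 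Your Borel--Carath\'eodory route, combined with pigeonholing on heights for the horizontal crossings, achieves the same $\exp(\mu^*\log^2 T)$ bound and is equally standard; the paper's approach is perhaps slightly cleaner in that a single vertical-line pigeonhole controls both the horizontal and vertical pieces at once, whereas your staircase needs a second pigeonhole (on real parts) to handle the vertical connectors---a point you gloss over with ``chosen to avoid the finitely many real parts of poles,'' which in fact requires staying at distance $\gg 1/\log T$ from them, not merely avoiding them.
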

Note that the third statement is an adaptation of a result due to
Tur\'an \cite[Appendix G]{TuranAna}.
\begin{proof}
For each $\sigma>\sigma_o$, there exists a finite product of the form
$D^*(s)=\prod_{\kappa=1}^k \zeta(a_\kappa s+b_\kappa)^{c_\kappa}$, such that
$D(s)=D^*(s)R(s)$ with $R(s)$ holomorphic and bounded in the half-plane
$\Re s>\sigma$. Collecting terms with $c_\kappa>0$ in $f_1$, and terms with
$c_\kappa<0$ in $f_2$, the first statement follows from the fact that
$\zeta(s)$ grows only polynomially in each strip of bounded width. Moreover,
the number of poles of $D^*$ in the region $\Re s\geq\sigma$, $|\Im s|\leq
T$ is bounded above by some multiple of the number of zeros of $\zeta(s)$ in
the domain $\Re s>0$, $|\Im s|\leq T\max_k a_k$, which implies the second
assertion. For the third note that for each $s$ with $\Re s>\sigma$ we have
\[
\frac{{D^*}'}{D^*}(s) = \sum_\rho \frac{m_\rho}{s-\rho} + \mathcal{O}(\log T),
\]
where the sum runs over all poles and zeros of $D^*(s)$ with
$|\Im(\rho-s)|<$, and $m_\rho$ is the (signed) multiplicity
of the pole $\rho$. The same argument when used to prove the second assertion also
yields that for $|T|>2$ the number of poles and zeros $\rho$ of $D^*$ with
$T\leq\Im\rho\leq T+1$ is $\leq c_\sigma|T|$, hence, there is some
$\sigma'\in[\sigma, \sigma+\epsilon]$, such that there is no pole or zero
$\rho$ of $D$ with $T\leq\Im\rho\leq T+1$ and
$|\Re\rho-\sigma'|<\frac{\epsilon}{c_\sigma\log T}$. Hence, on this line
segment, we have $\big|\frac{{D^*}'}{D^*}\big|\ll\log^2 T$. Choosing $T$ in
such a way that $D^*$ has no poles or zeros in the half-strip $\Re s>\sigma$,
$|T-\Im s|<\frac{\epsilon}{c_\sigma\log T}$, we find that there exists a path
$\gamma$ as desired such that each point on $\gamma$ can be linked to a point
in the half plane of absolute convergence of $D$ by a path of length $\ll 1$,
such that $\big|\frac{{D^*}'}{D^*}\big|\ll\log^2 T$ on this path. Hence, we
deduce $D(s)<e^{\mu^*(\sigma)\log^2 T}$ on $\gamma$.
\end{proof}
Now we give an example. In \cite{BG} we found a bijection between right cosets 
of $2t\times 2t$ symplectic matrices
and submodules of finite index of $\Z^{2t}$ which are equal to their duals
and which we call polarised. The counting function obtained corresponds to the
$p$-adic
zeta function of Weil-Igusa and occurs, for example, in \cite{duSG}.

\begin{Theo}
\label{thm:Axexplicite}
Denote by $a_n$ the number of polarised submodules of
$\Z^6$ of order $n$. Then we have for every $\epsilon>0$ 
\begin{equation}
\label{eq:Axexplicite}
A(x) := \sum_{n\geq 1} a_n e^{-n/x} = c_1 x^{7/3} + c_2 x^2 + c_3 x^{5/3} +
\sum_\rho \alpha_\rho x^{\frac{\rho+8}{6}} + \mathcal{O}(x^{4/3+\epsilon}), 
\end{equation}
where $\rho$ runs over all zeros of $\zeta$, and the coefficients
$c_1$, $c_2$, $c_3$, and $\alpha_\rho$ are numerically computable
constants. More 
precisely, we have $c_1=2.830\ldots$, $c_2=1.168\ldots$, and
$c_3=0.1037\ldots$. Moreover, the error term cannot be improved to
$\mathcal{O}(x^{4/3-\epsilon})$ for any fixed
$\epsilon>0$.
\end{Theo}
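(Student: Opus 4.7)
The plan is to establish (\ref{eq:Axexplicite}) by a Mellin--Perron argument applied to the smoothed partial sum, and to deduce the sharpness of the error from the natural-boundary results of Section~2. The first step is to factorise $D(s)=\sum a_n n^{-s}$. From the bijection of \cite{BG} between right cosets of $6\times 6$ symplectic matrices and polarised submodules of $\Z^6$, $D$ is an Euler product $\prod_p W(p^{-1},p^{-s})$ for an explicit $W\in\Z[X,Y]$ with $W(0,0)=1$, and a routine Estermann-type expansion yields $D(s)=\prod\zeta(ms-n)^{c_{nm}}R(s)$ with $R$ holomorphic in a half-plane containing $\Re s\ge 4/3$. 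Three ``low'' factors contribute simple poles of $D$ at $s=7/3,\,2,\,5/3$; among the remaining factors, one of the shape $\zeta(6s-8)$ is responsible for the sum over zeros of $\zeta$; the rest have shifts $n/m$ clustering at $4/3$ from above.

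The Mellin identity $\int_0^\infty e^{-n/x}x^{s-1}\,dx = n^s\Gamma(-s)$ (valid for $\Re s<0$) gives, after reflection,
\[
A(x) = \frac{1}{2\pi i}\int_{(c)}\Gamma(s)\,D(s)\,x^s\,ds
\]
for $c$ to the right of the abscissa of absolute convergence. Using part~(3) of Lemma~\ref{Lem:Path}, one shifts the contour to a zig-zag path $\gamma$ in the strip $4/3<\Re s\le 4/3+\epsilon$ along which $|D(s)|\ll\exp(\mu^*\log^2|\Im s|)$; the exponential decay of $|\Gamma(\sigma+it)|$ in $|t|$ then dominates, so that the integral over $\gamma$ contributes $\mathcal{O}(x^{4/3+\epsilon})$. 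The residues at $s=7/3,\,2,\,5/3$ produce the main terms $c_ix^{\alpha_i}$ with $c_i=\res_{s=\alpha_i}\Gamma(s)D(s)$, computable numerically from the explicit factorisation, while the poles at $s=(\rho+8)/6$ arising from the factor $\zeta(6s-8)$ yield the terms $\alpha_\rho x^{(\rho+8)/6}$; the resulting series converges absolutely thanks to the exponential decay of $\Gamma((\rho+8)/6)$ in $|\Im\rho|$ together with the Riemann--von Mangoldt estimate.

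For the $\Omega$-statement, the plan is to apply Theorem~2 to the factorisation of $D$, with $\beta=4/3$ the largest limit point of the ratios $n/m$; this identifies $\Re s=4/3$ as the natural boundary of $D$, provided the density condition on the primes $p$ contributing factors $\zeta(ps-n)$ with $n/p+1/(2p)>4/3$ is verified for the specific $W$ at hand. An explicit formula for $A(x)$ with error $\mathcal{O}(x^{4/3-\epsilon})$ would, by reversing the Mellin argument above (in the spirit of the proof of Theorem~\ref{thm:ExpCont}, with the $\Gamma$-weight replacing the difference-operator smoothing used there), yield a meromorphic continuation of $\Gamma(s)D(s)$, and hence of $D(s)$, across $\Re s=4/3$, contradicting the natural boundary; this is the $A(x)$-analogue of Corollary~\ref{Cor:NoExplicite}. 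The principal obstacle is verifying, for the polynomial $W$ coming from $\Z^6$, the density hypothesis $P((1+\epsilon)x)-P(x)\gg x^{(\sqrt 5-1)/2}\log^2 x$ of Theorem~2; this requires unpacking the Estermann-style expansion of $\prod_p W(p^{-1},p^{-s})$ and inspecting which exponents $c_{nm}$ give rise to the ``boundary-producing'' primes. A secondary difficulty is the bookkeeping in the residue step and the numerical evaluation of $c_1,\,c_2,\,c_3$ to the precision claimed.
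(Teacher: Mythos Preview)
Your approach to the explicit formula is essentially that of the paper: the same Mellin inversion with the $\Gamma$-weight, the same contour shift via Lemma~\ref{Lem:Path}(3), and the same identification of the residues at $7/3,\,2,\,5/3$ and at the poles $(\rho+8)/6$ coming from the factor $\zeta(6s-8)^{-1}$ in the Estermann expansion of $Z$.

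The one substantive divergence is in the $\Omega$-statement. You propose to establish the natural boundary at $\Re s=4/3$ by invoking Theorem~2, and you correctly flag the density hypothesis $P((1+\epsilon)x)-P(x)\gg x^{(\sqrt5-1)/2}\log^2 x$ as the principal obstacle. The paper does not go this route at all: it simply cites du~Sautoy--Grunewald \cite{duSG}, where the natural boundary for precisely this generating function was already proved. So your obstacle is self-imposed and can be removed by quoting \cite{duSG}; once the boundary is in hand, the argument that an error $\mathcal{O}(x^{4/3-\epsilon})$ would contradict it proceeds, as you say, parallel to Theorem~\ref{thm:ExpCont}. If you insist on making the argument self-contained via Theorem~2, you would indeed have to unpack the full cyclotomic expansion of $W$ and check which primes $p$ occur with $c_{np}\neq 0$ and $n/p+1/(2p)>4/3$; nothing in the paper does this, and it is not clear a priori that the density condition holds for this particular $W$.
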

\begin{proof}
The generating function for $a_n$ has the form \cite
 {BG} 
\begin{eqnarray*}
Z(s/3) & = & \zeta(s)\zeta(s-3)\zeta(s-5)\zeta(s-6)\prod_p
\Big(1+p^{1-s}+p^{2-s}+p^{3-s}+p^{4-s}+p^{5-2s}\Big)\\
 & = & \zeta(s)\zeta(s-3)\zeta(s-5)\zeta(s-6)\frac{\zeta(s-4)}{\zeta(2s-8)}\\
&&\qquad\times \prod_p
\Big(1+\frac{p^{1-s}+p^{2-s}+p^{3-s}+p^{5-2s}}{1+p^{4-s}}\Big)
\end{eqnarray*}
and in \cite{duSG} it was proved that $\Re\;s=\frac{4}{3}$ is the natural boundary
for the above.
The product over primes converges absolutely and uniformly in every
half-plane $\Re\;s>\frac{4}{3}+\epsilon$. Hence, $Z(s)$ has simple
poles at $7/3$, $2$ and $5/3$, poles at the zeros of $\zeta(6s-8)$,
and no other singularities in the half plane $\Re s>4/3$. Applying the
Mellin transform
\[
e^{-y} = \int\limits_{3-i\infty}^{3+i\infty} \Gamma(s)y^s\;ds
\]
we obtain
\[
A(x) = \frac{1}{2\pi i}\int\limits_{3-i\infty}^{3+i\infty} Z(s)\Gamma(s)x^s\;ds.
\]
For $\sigma$ and $\epsilon>0$ fixed, we have $\Gamma(\sigma+it)\ll
e^{-(\frac{\pi}{2}-\epsilon) t}$. We now choose a path as in
Lemma~\ref{Lem:Path}, and shift the integration to this path. Due to
the rapid decrease of $\Gamma$, we find that 
for $T=\log^3 x$ the integral on the new path is bounded above by
$x^{4/3+\epsilon}$. Hence, we obtain the formula
\[
A(x) = \sum_{\Re\rho>4/3+\epsilon} \Gamma(\rho)
x^\rho\res_{s=\rho} Z(s) + \mathcal{O}(x^{4/3+\epsilon}),
\]
where $\rho$ runs over $7/3$, $2$, $5/3$, and all complex numbers
$4/3+\rho/6$, where $\rho$ runs over all non-trivial zeros of $\zeta$.

To compute the values of $c_1, c_2, c_3$, we only have to compute 
the residuum of $Z(s)$ at these points, which does not pose any problems,
since the Euler products involved converge rather fast. We obtain the residue
$2.377, -1.168, 0.1149$, respectively, which yields the constants mentioned in
the Theorem. Using Mathematica, CPU-time for these computations was about 30
seconds. 

In view of \cite{duSG}, $Z(s)$ has a natural boundary on the
line $\Re s=4/3$, hence, the proof of the $\Omega$-result runs
parallel to the proof of Theorem~\ref{thm:ExpCont}.
\end{proof}

Having an explicit formula, we can use standard methods to draw
conclusions from it. For example, we have the following.

\begin{cor}
\label{cor:AxAsymp}
Define $A(x)$ as above. Then we have
\[
A(x) = c_1 x^{7/3} + c_2 x^2 + c_3 x^{5/3} + \mathcal{O}\big(x^{3/2}e^{-c\frac{\log
  x}{(\log\log x)^{2/3+\epsilon}}}\big)
\]
and
\[
A(x) = c_1 x^{7/3} + c_2 x^2 + c_3 x^{5/3} + \Omega_\pm
\big(x^{17/12-\epsilon}\big)
\]
\end{cor}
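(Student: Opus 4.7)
The plan is to exploit the explicit formula proved in Theorem~\ref{thm:Axexplicite}. Setting $E(x) := A(x) - c_1 x^{7/3} - c_2 x^2 - c_3 x^{5/3}$, we have
\[
E(x) = \sum_\rho \alpha_\rho x^{(\rho+8)/6} + \mathcal{O}(x^{4/3+\epsilon}),
\]
where $\rho$ ranges over non-trivial zeros of $\zeta$ and $\alpha_\rho = \Gamma((\rho+8)/6)\cdot\res_{s=(\rho+8)/6}Z(s)$. The crucial feature of this smoothed formula is that Stirling's formula gives $|\alpha_\rho|\ll e^{-\pi|\Im\rho|/12}$, so essentially only zeros of height $|\Im\rho|\ll\log x$ will contribute.

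For the upper bound I would invoke the Vinogradov--Korobov zero-free region, by which every non-trivial $\rho=\beta+i\gamma$ with $|\gamma|\geq 3$ satisfies $\beta \leq 1-c_0/[(\log|\gamma|)^{2/3}(\log\log|\gamma|)^{1/3}]$. Restricting to $|\gamma|\leq C\log x$ yields
\[
\Re\Bigl(\frac{\rho+8}{6}\Bigr) \leq \frac{3}{2} - \frac{c_0}{6(\log\log x)^{2/3}(\log\log\log x)^{1/3}},
\]
and since $(\log\log\log x)^{1/3}\ll(\log\log x)^\epsilon$, each such term is at most $|\alpha_\rho|\cdot x^{3/2}\exp(-c\log x/(\log\log x)^{2/3+\epsilon})$. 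The $\Gamma$-decay of $\alpha_\rho$ makes $\sum_\rho|\alpha_\rho|$ absolutely summable, and the tail $|\gamma|>C\log x$ is negligibly small, yielding the first assertion.

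For the $\Omega_\pm$-assertion I appeal to Hardy's theorem that $\zeta$ has infinitely many zeros $\rho=\tfrac12+i\gamma$ on the critical line. At such $\rho$ a residue computation writes $\alpha_\rho$ as a product of $\Gamma(17/12+i\gamma/6)$, $[6\zeta'(\rho)]^{-1}$, and further zeta and Euler-product factors of $Z$ evaluated at $17/12+i\gamma/6$; all of these are analytic and not identically zero on the line $\Re s=17/12$, hence $\alpha_\rho\neq 0$ for all but a discrete set of critical-line zeros. Now assume for contradiction that $E(x) = \mathcal{O}(x^{17/12-\epsilon})$; then for $C$ sufficiently large and either sign, $F_\pm(x) := C x^{17/12-\epsilon/2}\pm E(x)$ is non-negative on $[x_0,\infty)$. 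By Landau's oscillation theorem, its Mellin transform converges in a half-plane whose abscissa of convergence is a real singularity of the transform. But the Mellin transform of $E$ has genuine non-real poles at every $17/12+i\gamma/6$ from the critical line zeros above, contradicting Landau. This simultaneously yields both the $\Omega_+$ and $\Omega_-$ bounds.

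The principal technical obstacle lies in the upper bound: one must make the reduction ``only $|\gamma|\ll\log x$ matters'' fully rigorous through a quantitative truncated explicit formula using the contour of Lemma~\ref{Lem:Path} to control the residual integral, and then check that the Vinogradov--Korobov exponent together with the stray $(\log\log\log x)^{1/3}$ factor fits comfortably inside the $(\log\log x)^\epsilon$ slack allowed by the statement. The $\Omega_\pm$-half, while essentially routine, hinges on the non-vanishing of $\alpha_\rho$ for a positive-density subset of critical-line zeros, which reduces to showing that the auxiliary Euler-product residual in the factorisation of $Z$ is not identically zero on $\Re s=17/12$.
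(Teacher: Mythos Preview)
Your upper-bound argument is essentially the paper's: both feed the Vinogradov--Korobov zero-free region into the explicit formula of Theorem~\ref{thm:Axexplicite}, use the exponential decay of $\Gamma$ to truncate the sum over zeros at height $\asymp\log x$ (the paper says $\log^2 T$, but this is immaterial), and absorb the stray $\log\log\log$ factor into the $\epsilon$-slack.

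For the $\Omega_\pm$-half you take a genuinely different route. The paper simply invokes Tur\'an's power-sum method (cf.\ \cite[Chapter~47]{TuranAna}): one isolates a finite subsum $\sum_{|\gamma|\le T}\alpha_\rho x^{(\rho+8)/6}$ and uses a Tur\'an-type inequality to force it to be large with prescribed sign on a suitable progression of $x$-values. Your Landau-oscillation approach is a legitimate alternative, and has the advantage of being conceptually lighter; the Tur\'an route, on the other hand, gives $\Omega_\pm$ directly without any auxiliary analyticity check on the real axis.

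There is, however, a genuine logical slip in your Landau set-up. Assuming $E(x)=\mathcal{O}(x^{17/12-\epsilon})$ and deriving a contradiction only yields $E=\Omega(x^{17/12-\epsilon})$, not $\Omega_\pm$; the sentence ``this simultaneously yields both the $\Omega_+$ and $\Omega_-$ bounds'' is unjustified as written. To obtain $\Omega_+$ you must instead assume the one-sided bound $E(x)\le \delta x^{17/12-\epsilon}$ eventually (the negation of $\Omega_+$), apply Landau to $F_-(x)=x^{17/12-\epsilon}-E(x)\ge 0$, and then \emph{verify that the Mellin transform of $E$ has no real singularity on $[17/12-\epsilon,\infty)$} so that the real abscissa forced by Landau has nowhere to sit; this last check (no real pole of $Z(s)\Gamma(s)$ strictly between $4/3$ and $5/3$ once the residues at $7/3,2,5/3$ are removed, and no real nontrivial zero of $\zeta$) is missing from your sketch. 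The $\Omega_-$ case is symmetric. With these corrections your argument goes through.
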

\begin{proof}
Note that apart from the poles at $7/3, 2, 5/3$ and $3/2$ all
singularities of $D(s)$ in the half-plane $\Re s>4/3$ come from zeros of
$\zeta(6s-8)$, hence, for a certain constant $c$ we have for all $\rho$
occurring in (\ref{eq:Axexplicite}) the relation $\Re\rho<\frac{3}{2} -
\frac{c}{(\log\log|\Im s|)^{2/3+\epsilon}}$. Since $\Gamma(s)$ decreases
exponentially fast on each line parallel to the imaginary axis, we see that
the contribution of a single zero is at most
\[
\max_{T>3} x^{\frac{3}{2} - \frac{c}{\log^{2/3+\epsilon} T}} e^{-c' T} \ll
x^{\frac{3}{2}} e^{-c\frac{\log x}{(\log\log x)^{2/3+\epsilon}}};
\]
moreover, the contribution of zeros with imaginary part $>\log^2 T$ is
negligible. Hence, the contribution of all zeros of $\zeta(6s-8)$ and the
error term in (\ref{eq:Axexplicite}) together give an error term of order
$x^{\frac{3}{2}} e^{-c\frac{\log x}{(\log\log x)^{2/3+\epsilon}}}$, and our
claim follows.

The $\Omega$-estimate follows from a standard application of Tur\'an's
theory of powersums, confer \cite[Chapter 47]{TuranAna}.
\end{proof}

Of course, these computations did not make use of the natural boundary
of $Z$, however, the existence of a natural boundary implies that there is a
limit to what can be achieved by complex analytic means.

\begin{tabular}{ll}
Gautami Bhowmik, & Jan-Christoph Schlage-Puchta,\\
Universit\'e de Lille 1, & Albert-Ludwigs-Universit\"at,\\
Laboratoire Paul Painlev\'e, & Mathematisches Institut,\\
U.M.R. CNRS 8524,  & Eckerstr. 1,\\
  59655 Villeneuve d'Ascq Cedex, & 79104 Freiburg,\\
  France & Germany\\
bhowmik@math.univ-lille1.fr & jcp@math.uni-freiburg.de
\end{tabular}

\begin{thebibliography}{99}
\bibitem{Ba} R. Backlund, \"Uber die Nullstellen der Riemannschen
Zetafunktion, {\em Acta Math.} {\bf 41} (1918), 345--375.
\bibitem{Est}  T. Estermann, On certain functions represented by Dirichlet
  series,  {\em Proc. London Math. Soc.} {\bf 27} (1928), 435--448.
\bibitem{Dahl} G. Dahlquist, On the analytic continuation of Eulerian
  products, {\em Ark. Mat.} {\bf 1} (1952), 533--554.
\bibitem{BG} G. Bhowmik, F. Grunewald, Counting Functions of Classical Groups, preprint.
\bibitem{duSG} M. du Sautoy, F. Grunewald, Zeta functions of groups: zeros and
  friendly ghosts, {\em  Amer. J. Math.} {\bf 124} (2002),  1--48.
\bibitem{duSPre} M. du Sautoy, Zeta functions of groups and natural
  boundaries, preprint at http://www.maths.ox.ac.uk/~dusautoy/
\bibitem{TuranAna} P. Tur\'an, {\em On a new method of analysis and
    its applications}, Wiley-Interscience, New York, 1984. 
\bibitem{Ka} J.P. Kahane, {\em Some random series of functions}, CUP, 1985.
\bibitem{Queff} H. Qu\'effelec, Propri\'et\'es presque s\^ ures des s\'eries
  de Dirichlet et des produits d'Euler, {\em Can.J.Math.} {\bf32}, 1980, 531--558.
\end{thebibliography}
\end{document}